\newcommand{\Spvek}[2][r]{%
  \gdef\@VORNE{1}
  \left(\hskip-\arraycolsep%
    \begin{array}{#1}\vekSp@lten{#2}\end{array}%
  \hskip-\arraycolsep\right)}
\def\vekSp@lten#1{\xvekSp@lten#1;vekL@stLine;}
\def\vekL@stLine{vekL@stLine}
\def\xvekSp@lten#1;{\def\temp{#1}%
  \ifx\temp\vekL@stLine
  \else
    \ifnum\@VORNE=1\gdef\@VORNE{0}
    \else\@arraycr\fi%
    #1%
    \expandafter\xvekSp@lten
  \fi}
\newtheorem{thm}{Theorem}[section]
\newtheorem{cor}[thm]{Corollary}
\newtheorem{lem}[thm]{Lemma}
\newtheorem{exa}[thm]{Example}
\newtheorem{rem}[thm]{Remark}
\theoremstyle{definition}
\newtheorem{defn}{Definition}[section]
\newcommand{\scr}[1]{\mathscr #1}
\definecolor{wco}{rgb}{0.5,0.2,0.3}
\numberwithin{equation}{section} \theoremstyle{remark}
\newcommand{\ua}{\uparrow}
\title{{\bf  Path Independence of Additive Functionals for SDEs  under $G$-framework}
}
\author{
{\bf Panpan Ren$^{b)}$,    Fen-Fen Yang$^{a)}\footnote{Corresponding author}$  }\\
\footnotesize{$^{b)}$Department of Mathematics, Swansea   University, Singleton Park, SA2 8PP, United Kingdom}\\
\footnotesize{$^{a)}$Center for Applied Mathematics, Tianjin University, Tianjin 300072, China}\\
\footnotesize{$^{b)}$673788@swansea.ac.uk}, $^{a)}$yangfenfen@tju.edu.cn}
\begin{document}
\allowdisplaybreaks
\def\R{\mathbb R}  \def\ff{\frac} \def\ss{\sqrt} \def\B{\mathbf
B}
\def\N{\mathbb N} \def\kk{\kappa} \def\m{{\bf m}}
\def\ee{\varepsilon}\def\ddd{D^*}
\def\dd{\delta} \def\DD{\Delta} \def\vv{\varepsilon} \def\rr{\rho}
\def\<{\langle} \def\>{\rangle} \def\GG{\Gamma} \def\gg{\gamma}
  \def\nn{\nabla} \def\pp{\partial} \def\E{\mathbb E}
\def\d{\text{\rm{d}}} \def\bb{\beta} \def\aa{\alpha} \def\D{\scr D}
  \def\si{\sigma} \def\ess{\text{\rm{ess}}}
\def\beg{\begin} \def\beq{\begin{equation}}  \def\F{\scr F}
\def\Ric{\text{\rm{Ric}}} \def\Hess{\text{\rm{Hess}}}
\def\e{\text{\rm{e}}} \def\ua{\underline a} \def\OO{\Omega}  \def\oo{\omega}
 \def\tt{\tilde} \def\Ric{\text{\rm{Ric}}}
\def\cut{\text{\rm{cut}}} \def\P{\mathbb P} \def\ifn{I_n(f^{\bigotimes n})}
\def\C{\scr C}   \def\G{\scr G}   \def\aaa{\mathbf{r}}     \def\r{r}
\def\gap{\text{\rm{gap}}} \def\prr{\pi_{{\bf m},\varrho}}  \def\r{\mathbf r}
\def\Z{\mathbb Z} \def\vrr{\varrho} \def\ll{\lambda}
\def\L{\scr L}\def\Tt{\tt} \def\TT{\tt}\def\II{\mathbb I}
\def\i{{\rm in}}\def\Sect{{\rm Sect}}  \def\H{\mathbb H}
\def\M{\scr M}\def\Q{\mathbb Q} \def\texto{\text{o}} \def\LL{\Lambda}
\def\Rank{{\rm Rank}} \def\B{\scr B} \def\i{{\rm i}} \def\HR{\hat{\R}^d}
\def\to{\rightarrow}\def\l{\ell}\def\iint{\int}
\def\EE{\scr E}\def\no{\nonumber}
\def\A{\scr A}\def\V{\mathbb V}\def\osc{{\rm osc}}
\def\BB{\scr B}\def\Ent{{\rm Ent}}\def\3{\triangle}\def\H{\scr H}
\def\U{\scr U}\def\8{\infty}\def\1{\lesssim}\def\HH{\mathrm{H}}
 \def\T{\scr T}
\maketitle

\begin{abstract}
The path independence of additive functionals for SDEs driven by the $G$-Brownian motion is characterized by nonlinear PDEs. The main result generalizes the existing ones for   SDEs driven by the standard Brownian motion.

\end{abstract} \noindent
 AMS subject Classification:\  60H10, 60H15.   \\
\noindent
 Keywords: additive functional; $G$-SDEs; $G$-Brownian motion;   nonlinear PDE
 \vskip 2cm

\section{Introduction}

Stochastic differential equations (SDEs) under the linear
probability space have been widely used in
 modeling financial markets and economic phenomena \cite{B08,BS}. However, in many practical situations, most of the financial activities
 take place with uncertainty \cite{BR},  for which a fundamental theory of SDEs driven by the $G$-Brownian motion ($G$-SDEs) has been developed in
   \cite{peng2, peng1, peng4}. Since then $G$-SDEs have received much attention,  see for instance  \cite{HJP} on the Feyman-Kac formula,  \cite{HJ,HJY} on the stochastic control,  \cite{FWZ,FOZ} on the ergodicity,
 \cite{RYS,WG} on the
 stochastic stability, and \cite{GRR} on the  $G$-SPDEs.

In the equilibrium financial market, there exists a risk neutral
measure which admits
 a path independent density with respect to the real world
 probability
 \cite{HC}. To construct such risk neutral measures,  the path independence of additive functionals for
 SDEs has been investigated extensively; see \cite{Wang} for the   pioneer
 work.
 Subsequently, \cite{Wang}
 has been extended in   \cite{Wang3,QW,Wang2} for finite dimensional SDEs, and in \cite{QW2, Wang1} for infinite dimensional
 SPDEs, where \cite{Wang2} allows the SDEs involved to be degenerate. Recently, \cite{RW}
 investigated the path independence of additive functionals for a class of distribution dependent SDEs.
Nevertheless,  all of these papers only focus on linear probability
spaces.  To fill this gap,  in this   paper, we intend to characterize the  path
independence of additive functionals  for $G$-SDEs.
To this end, below we recall some basic facts on the $G$-Brownian motion.

For a positive integer $d$, let $(\R^d, \<\cdot,\cdot\>,|\cdot|)$ be the $d$-dimensional Euclidean space, $\R^d\otimes\R^d$  the family
of all $d\times d$-matrices, $\mathbb{S}^d$
the collection of all symmetric $d\times d$-matrices, ${\bf0}_d\in\R^d$ the zero vector, ${\bf0}_{d\times d}\in\R^d\otimes \R^d$ the zero matrix,
 and ${\bf{I}}_{d\times d}\in\R^d\otimes \R^d$ the identity  matrix.
 For a matrix $A$, let $A^*$ be its transpose and $\|A\|_{\rm HS}=(\mbox{trace}(AA^*))^{1/2}$ be its  Hilbert-Schmidt norm (or Frobenius norm). For a
 number $a\in\R$, $a^+$ and $a^-$ stipulate its positive part and  negative part, respectively.
  For $\sigma_1, \sigma_2 \in \mathbb{S}^d,$  the notation $\sigma_1\leq \sigma_2$ (res. $\sigma_1 < \sigma_2$) means
   that $\sigma_2- \sigma_1$ is non-negative (res. positive) definite, and we let
  $$ [{\sigma_1},  {\sigma_2}] :=\{\gamma|\gamma\in \mathbb{S}^d, {\sigma_1}\leq\gamma \leq  {\sigma_2}\}.$$
    Let $C^{1,2}(\R_+\times\R^d;\R)$ be the collection of
   all continuous functions $V:\R_+\times\R^d\rightarrow\R $ which are once differentiable w.r.t. the first argument, twice differentiable w.r.t.
    the second argument, and all these derivatives are joint continuous. Write $\nn$ and $\nn^2$ by the gradient operator and Hessian operator, respectively.

For any fixed $T>0$, $$\OO_T=\{\omega|[0,T]\ni
t\mapsto\omega_t\in\R^d \mbox{ is continuous with
}\omega(0)={\bf0}_d \}$$ endowed with the uniform topology.  Let
$B_t(\omega)=\omega_t, \omega\in\OO_T, $ be the canonical process.
Set
  $$L_{ip}(\Omega_T):=\{\varphi(B_{t_1}, \cdots, B_{t_n}),
  n\in\mathbb{N},
   t_1,\cdot \cdot \cdot, t_n\in [0,T],\varphi\in C_{b,lip}(\R^{d}\otimes\R^n)\},$$
where  $C_{b,lip}(\mathbb{R}^{d}\otimes\R^n)$ denotes  the set of
bounded Lipschitz functions  $f:\mathbb{R}^{d}\otimes \R^n\to\R$.
 Let $G: \mathbb{S}^d \to \R $ be a monotonic, sublinear   and homogeneous function; see e.g.
\cite[p16]{peng4}. Throughtout the paper,  we  always assume that $G:
\mathbb{S}^d \to \R $ is non-degenerate, i.e., there exists some
$\delta>0$ such that
  \begin{equation} \label{Gnon}
     G(A)-G(B)\geq \frac{\delta}{2} \mbox{trace}[A-B],\ A\geq B, A,B \in \mathbb{S}^d.
  \end{equation}
 For any $\xi \in L_{ip}(\Omega_T)$,  i.e.,
 $$\xi(\omega)=\varphi(\omega(t_1),\cdot \cdot \cdot,\omega(t_{n})), \ 0=t_0< t_1<\cdot \cdot \cdot<t_n=T,$$
  the conditional $G$-expectation is  defined by
  $$
 \bar{\mathbb{E}}_t[\xi]:=u_k(t,\omega(t);\omega(t_1),\cdot \cdot \cdot,\omega(t_{k-1}) ),  \  \xi \in L_{ip}(\Omega_T), \ t \in [t_{k-1}, t_k), \ k=1, \cdot\cdot\cdot, n,
 $$
 where $(t,x)\mapsto u_k(t,x; x_1, \cdot\cdot\cdot,x_{k-1})$, $k=1,\cdot \cdot\cdot,n$, solves the following $G$-heat equation
\begin{equation}\label{Gheat}
 \begin{cases}
 \partial_tu_k+G(\partial_x^2u_k)=0, \ (t,x) \in [t_{k-1}, t_k)\times \R^d, \ k=1,\cdot \cdot\cdot,n,\\
 u_k(t_k,x;x_1,\cdot\cdot\cdot,x_{k-1})= u_{k+1}(t_k,x;x_1,\cdot\cdot\cdot,x_{k-1},x_k), \ k=1, \cdot\cdot\cdot, n-1,\\
  u_n(t_n,x;x_1,\cdot\cdot\cdot,x_{n-1})=\varphi(x_1,\cdot\cdot\cdot,x_{n-1},x), \ k=n.
 \end{cases}
 \end{equation}
 Since $G$ is non-degenerate,  the solution of \eqref{Gheat} satisfies $u_k\in C^{1,2}(\R_+\times\R^d;\R);$ see  \cite[Appendix $C$, Theorem 4.5, p127]{peng4}.
The corresponding $G$-expectation of $\xi$ is defined by $\bar{\mathbb{E}}[\xi]=\bar{\mathbb{E}}_0[\xi]$.
Then the canonical process $B_t(\omega):=\omega_t$ is called  a  $G$-Brownian motion in  $(\Omega_T,L_{G}^p(\Omega_T),\bar{\mathbb{E}})$,
 where
 $L_G^{p}(\Omega_T$)  is the completion of $L_{ip}(\Omega_T)$  under the norm $(\bar{\mathbb{E}}[|\cdot|^p])^{\frac{1}{p}}$, $p\geq1. $ By  definition, we have
  $G(A)=\frac{1}{2}\bar{\mathbb{E}}\langle
 AB_1,B_1\rangle$, $A\in\mathbb{S}^d$.   The function $G$ is called the generating function corresponding of the $d$-dimensional
$G$-Brownian motion $(B_t)_{t\ge0}$. According to \cite{peng4}, there exists a bounded, convex, and closed subset $\Theta\subset \mathbb{S}^m$ such that
\begin{equation}\label{GA}
 G(A)=\frac{1}{2}\sup _{Q\in \Theta}\mbox{trace}[AQ], \ A\in\mathbb{S}^d.
\end{equation}
In particular, for 1-dimensional $G$-Brownian motion $(B_t)_{t\ge0}$, one has
$G(a)=(\overline{\sigma}^2a^+-\underline{\sigma}^2a^-)/2, a\in \mathbb{R},  $ where $\overline{\sigma}^2: = \bar{\mathbb{E}} [B^2_1]\geq -\bar{\mathbb{E}} [-B^2_1 ]= :\underline{\sigma}^2>0$.

 Let
\begin{equation*}\label{equa11} M_{G}^{p,0}([0,T])=
\Big\{\eta_t:=\sum_{j=0}^{N-1} \xi_{j} I_{[t_j, t_{j+1})}(t);
~\xi_{j}\in L_{G}^p(\Omega_{t_{j}}),
 N\in\mathbb{N},\ 0=t_0<t_1<\cdots <t_N=T \Big\}.
\end{equation*}
Let $M_G^p([0,T])$ and $H_G^p([0,T])$ be the completion of $M_G^{p,0}([0,T])$ under the norm
$$\|\eta\|_{M_G^p([0,T])}:=\left(\bar{\mathbb{E}}\int_{0}^{T}|\eta_{t}|^p\d t\right)^{\frac{1}{p}},\  \|\eta\|_{H_G^p([0,T])}:=\left\{\bar{\mathbb{E}}\left(\int_{0}^{T}|\eta_{t}|^2\d t\right)^\frac{p}{2}\right\}^{\frac{1}{p}},$$
respectively. We need to point out that if  $p=2$, then $M_G^p([0,T])=H_G^p([0,T])$. Denote by $M_G^p([0,T];\R^d)$   all $d$-dimensional stochastic processes $\eta_t=(\eta_t^1, \cdot \cdot \cdot, \eta_t^d),$ $t\geq0$
with $\eta_t^i\in M_G^p([0,T]).$  Let $  H_G^1([0,T]; \R^d) $  be all  $d$-dimensional stochastic processes $\zeta_t=(\zeta_t^{1}, \cdot\cdot\cdot,\zeta_t^{d}), t\geq0$ with $\zeta^{i}\in H_G^1([0,T]). $

Furthermore,   we also need the  Choquet capacity associated with the $G$-expectation. Let $\mathcal{M}$ be the collection of all probability measures on  $(\Omega_T, \mathcal{B}(\Omega_T))$.
According to \cite{15},    there exists a weakly compact subset $\mathcal{P}\subset \mathcal{M}$   such that
$$\bar{\mathbb{E}}[X]=\sup_{P\in \mathcal{P}}\mathbb{E}_P[X], \ X\in L_{ip}(\Omega_T).$$
 Then the associated  Choquet capacity is defined by
$$c(A)=\sup_{P\in \mathcal{P}}P(A), \ A\in \mathcal{B}(\Omega_T).$$
A set $A\subset \Omega_T$ is called polar if $c(A)=0$,  and we say that a property   holds  quasi-surely (q.s.)
if it holds outside a polar set.

In this paper,  we consider the following $G$-SDE
\begin{equation}\label{kerner*}
 \d X_t=b(t,X_t)\d t+ \sum_{i,j=1}^dh_{ij}(t,X_t)\d\langle
 B^i,B^j\rangle_t+\langle\sigma(t,X_t),\d B_t\rangle,
\end{equation}
where $b, h_{ij}=h_{ji}:[0,T]\times   \R^d\to \R^d$ and $\si:[0,T]
\times\R^d\to \R^d\otimes \R^d$,  $B_t$ is a $d$-dimensional
$G$-Brwonian motion, and $\langle
 B^i,B^j\rangle_t$ stands for the mutual variation process of the
 $i$-th component $B^i_t$ and the
 $j$-th component $B^j_t$. To  ensure  the existence and uniqueness of the solution of
\eqref{kerner*} in $ M_G^2([0,T]; \R^d)$,
 we  assume
 \begin{equation}\label{Lip}
    \begin{split}
    & |b(t, x)-b(t, y)|+\sum_{i,j=1}^d |h_{ij}(t, x)-h_{ij}(t, y)|+\|\sigma(t, x)-\sigma(t, y)\|_{\rm HS}\leq
    K|x-y|,
     \end{split}
    \end{equation}
    for some   constant $  K\geq0$ and all $t\in[0,T]$, $ x$, $y\in \mathbb{R}^d;$
    see \cite[Theorem 1.2, p82]{peng4}.

Now we recall from \cite{RW} the following  notions for the path independence of additive functionals.

\begin{defn}\label{def}
For $ f=(f_{ij}):\R_+\times\R^d\rightarrow\mathbb{S}^d$ and
$g:\R_+\times\R^d\rightarrow\R^d$, the additive functional
$(A_{s,t}^{f,g})_{0\leq s\leq t}$  is defined by
\begin{equation}\label{eq2}
  A_{s,t}^{f,g}=\beta\int_s^t G(f)(r, X_r)\d r+\aa\sum_{i,j=1}^d\int_s^t  f_{ij}(r, X_r)\d\langle B^i,B^j\rangle_r +\int_s^t\langle g(r, X_r),
  \d B_r\rangle,
\end{equation}
where  $ \beta,\aa \in\mathbb{ R}$ are two parameters, $f=f^\ast$, and $(X_r)_{r\ge0}$ solves \eqref{kerner*}.
\end{defn}

\begin{defn}\label{defn1}
The additive functional $ A_{s,t}^{f,g}$ is said to be
path independent, if there exists a  function $
V:\R_+\times\R^d\rightarrow\R $ such that for any  $s\in [0,T]$ and any solution $(X_t)_{t\in[s,T]}$ to  \eqref{kerner*} from time $s$,
it holds
\begin{equation}\label{eq1}
    A_{s,t}^{f,g} =V(t,X_t)-V(s,X_s), \ t\in[s,T].
\end{equation}
\end{defn}
In terms of {\bf Definition} \ref{defn1}, the path independence  of the additive functional $A_{s,t}^{f,g}$ means that  $A_{s,t}^{f,g}$ depends only on $X_s$ and $X_t$  but not the path $(X_r)_{s<r< t},$ for any solution $(X_r)_{r\in[s,T]}$ to  \eqref{kerner*} from times $s$ and any $t\in(s, T]$.

The aim of this paper is to provide sufficient and necessary  characterizations for the path independence of the additive
functional $A_{s,t}^{f,g}$.

 To see that \eqref{eq2} covers additive functionals investigated in existing references for the path independence under
 the linear probability space, let $\Theta$ in \eqref{GA} be a singleton: $\Theta={Q}$, and $\bar{\mathbb{E}}=\E$ be a linear expectation.   Then the associated $G$-Brownian motion $B_t$ becomes the classical zero-mean normal distributed with covariance $Q$. Specially,
 let  $\underline{\sigma}^2=\overline{\sigma}^2={\bf1}_{d\times d}$,  i.e., $Q={\bf1}_{d\times d}$,  $G(A) =\frac{1}{2}\mbox{trace}(A)$, $A\in \mathbb{S}^d$,
we have  $\d \langle B^i,B^j\rangle_r=\delta_{ij}\d r$, where
$\delta_{ij}$ is a indicative function, $1\leq i,j \leq d$,
 and \eqref{eq2} reduces to
 \begin{equation*}
  A_{s,t}^{f,g}=\left(\aa+\frac{\beta}{2}\right)\sum_{i=1}^d\int_s^t  f_{ii}(r, X_r)\d r +\int_s^t\langle g(r, X_r),
  \d B_r\rangle.
\end{equation*}
 Taking ${\bf{f}}(r, X_r)=\left(\aa+\frac{\beta}{2}\right)\sum_{i=1}^d f_{ii}(r, X_r)$, this goes back to the additive functional studied in \cite{RW}:
\begin{equation*}
  A_{s,t}^{f,g}:=\int_s^t {\bf{f}}(r, X_r)\d r +\int_s^t\langle g(r, X_r),
  \d B_r\rangle.
\end{equation*}
In particular, when ${\bf{f}}=\frac{1}{2}|g|^2$, we have
\begin{equation}\label{eq3}
  A_{s,t}^{\frac{1}{2}|g|^2,g}
 =\frac{1}{2}\int_s^t|g|^2(r, X_r)\d r+\int_s^t\langle g(r, X_r),\d B_r\rangle , \ 0 \leq s\leq t,
\end{equation}
which corresponds to the Girsanov transform $\d\mathbb{Q}_{s,t}:=\exp\{{-A_{s,t}^{\frac{1}{2}|g|^2,g}}\}\d\mathbb{P}$.
To make the solution $X_t$ of \eqref{kerner*}    a martingale under $\mathbb{Q}_{s,t}$,   we reformulate \eqref{kerner*} as
\begin{equation*}
 \d X_t=\{b+ h\}(t,X_t)\d t+\langle\sigma(t,X_t),\d B_t\rangle,
\end{equation*}
where \begin{equation}\label{eq4}
 h(t,u):=\sum_{i=1}^dh_{ii}(t,u),
~~ (t,u)\in\R_+\times\R^d.
\end{equation}
When $\sigma$ is invertible,   taking $g=\si^{-1}(b+h)$  in \eqref{eq3},  we have
\begin{equation*}
  A_{s,t}^{\frac{1}{2}|g|^2,g}=\frac{1}{2}\int_s^t|\si^{-1}(b+h)|^2(r,X_r)\d r+\int_s^t\langle (\si^{-1}(b+h))(r,X_r),\d B_r\rangle , \ 0 \leq s\leq
 t.
\end{equation*}
Then, by the Girsanov theorem, $(X_r)_{s \leq r\leq t}$ is a martingale under $\mathbb{Q}_{s,t}$, which fits well the requirement of risk netural measure in finance.
The path independence of this particular additive functional has been investigated in
\cite{Wang3, RW,Wang,Wang1,Wang2}.

\begin{rem} \label{rem1}
    When $\alpha \neq 0$,  \eqref{eq2}  is  equivalent to
    \begin{equation*}\label{md}
    \begin{split}
    \alpha^{-1} A_{s,t}^{f,g}&=\alpha^{-1}\beta \int_s^t G(f)(r, X_r)\d r+\sum_{i,j=1}^m\int_s^t  f_{ij}(r, X_r)\d\langle B^i,B^j\rangle_r\\
    &\quad +\int_s^t\langle \alpha^{-1}g(r, X_r),\d B_r\rangle.
    \end{split}
    \end{equation*}
    So, in this case, the path independence of the  additive functional \eqref{eq2} can be reduced to the case of $\alpha=1.$
However, the case for $\aa=0$ also includes  interesting examples (see  Example \ref{exa1} below), so it is reasonable to consider $A_{s,t}^{f,g}$ in \eqref{eq2} with  two parameters $\alpha $ and $ \beta$.
\end{rem}

The remainder of the paper is organized as follows. In Section 2, following the line of \cite{Song2, Song}, we present a decomposition theorem for
multidimensional  $G$-semimartingales. In Section 3,
 we   characterize the path independence of $A^{f,g}_{s,t}$ using  nonlinear PDEs, so that  main results in \cite{Wang3,Wang,Wang1,Wang2} are extended to the present
  nonlinear  expectation setting.  Finally, in Section 4, we provide an example
to  illustrate the main result for $\alpha=0$ as mentioned in Remark
\ref{rem1}.

\section{A   Decomposition Theorem}
This part is essentially due to  \cite{Song2, Song}.  Set
$\delta_n(t):=\sum_{i=1}^{n-1}(-1)^i{\bf 1}_{(\frac{i}{n},
\frac{i+1}{n}]}(t)$, $t\in[0,T].$
 For any $A, B\in \mathbb{S}^d$, let $(A,B)_{\rm
 HS}=\mbox{trace}(AB)$ and $\|A\|_{\rm HS}=\ss{(A,A)_{\rm
 HS}}$. Then $(\mathbb{S}^d, \<\cdot,\cdot\>_{\rm HS}, \|\cdot\|_{\rm
 HS})$ is a Hilbert space; see e.g. \cite{RWu}.  Let   $\mbox{\mbox{spec}}(\cdot)$ be    the spectrum of a
 matrix $\cdot$, and let $\<B\>_t=(\<B^i,B^j\>_t)_{ij}$.

From now on, we consider
\begin{equation}\label{G(A)}
 G(A):=\frac{1}{2}\sup _{\gamma\in [\underline{\sigma}, \bar{\sigma}]}\<\gamma^2,A\>_{\rm HS}, \ A\in\mathbb{S}^d, 0<\underline{\sigma}<\overline{\sigma} \ \rm{are \ two\ matrices\   in} \ \mathbb{S}^d.
\end{equation}
Consequently, $\underline{\sigma}^2<\frac{\d}{\d t}\langle B\rangle_t\leq\bar{\sigma}^2,$ and \eqref{Gnon} holds for $\delta=\lambda_0(\underline{\sigma}^2)$, where $\lambda_0(\underline{\sigma}^2)=\min\{\ll\in\mbox{spec}(\underline\si^2)\}$.

Let $  M_G^1([0,T]; \mathbb{S}^d) $  be all symmetric $d\times d$ matrices $\eta_t=(\eta_t^{ij})_{d\times d}$ with $\eta^{ij}\in M_G^1([0,T]), $  and
$\|\eta\|_{M_G^1([0,T]; \mathbb{S}^d)}=\bar{\mathbb{E}}\int_{0}^{T}\|\eta_{t}\|_{\rm HS}\d t.$

  Let
 $c_0=\min\{\ll\in\mbox{spec}((\bar\si^2-\underline\si^2)/2)\}$,  $C_0=\ff{1}{2}\|\bar\si^2-\underline\si^2\|_{\rm
 HS}$.

 To make the content
 self-contained, we cite from \cite{Song2} some  well-known  results and
 restated them as follows.

\begin{lem}\label{lem01}
{\rm Let $G$ be in \eqref{G(A)}.  For any $\eta\in M_G^1([0,T]; \mathbb{S}^d)$, the
limit
$$\|\eta\|_{\mathbb{M}_G}:=\lim_{n\to \infty}\bar{\mathbb{E}}  \int_0^T \delta_n(s)(\eta_s,\d\langle
B\rangle_s)_{\rm HS}$$ exists. Moreover, $\|\cdot\|_{\mathbb{M}_G}$
defines a norm on $M_G^1([0,T]; \mathbb{S}^d), $  and for any $0<\epsilon\leq c_0$, it holds that,
$$\epsilon\|\eta\|_{M_{G_\epsilon}^1([0,T]; \mathbb{S}^d)}\leq \|\eta\|_{\mathbb{M}_G}\leq C_0\|\eta\|_{M_G^1([0,T]; \mathbb{S}^d)},$$
where
$G_\epsilon(A):=\frac{1}{2}\sup_{\gamma\in[\underline{\sigma}_{\epsilon},
\bar{\sigma}_{\epsilon}]}\<\gamma^2,A\>_{\rm HS},$
$\underline{\sigma}_{\epsilon}^2:=\underline{\sigma}^2+\epsilon\,{\bf
I}_{d\times d}$, and
$\bar{\sigma}_{\epsilon}^2:=\bar{\sigma}^2-\epsilon\,{\bf
I}_{d\times d}$. }
\end{lem}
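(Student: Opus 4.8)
The plan is to follow the strategy of \cite{Song2}, treating the functional $\eta\mapsto \bar{\mathbb{E}}\int_0^T\delta_n(s)(\eta_s,\d\langle B\rangle_s)_{\rm HS}$ as a Cauchy sequence in $n$ on the dense subspace $M_G^{1,0}([0,T];\mathbb{S}^d)$ of step processes, and then passing to the completion. First I would establish the two-sided bound on this pre-limit functional directly for step processes: for a simple $\eta_t=\sum_j\xi_jI_{[t_j,t_{j+1})}(t)$ with $\xi_j\in L_G^1(\Omega_{t_j};\mathbb{S}^d)$, refine the partition so that it is compatible with the dyadic grid $\{i/n\}$, and write $\int_0^T\delta_n(s)(\eta_s,\d\langle B\rangle_s)_{\rm HS}$ as a telescoping-type sum of increments $(\langle B\rangle_{(i+1)/n}-\langle B\rangle_{i/n})$ paired with the $\xi_j$'s with alternating signs. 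The key input here is the sublinearity of $\bar{\mathbb{E}}$ together with the characterization \eqref{G(A)}: the oscillation of $\langle B\rangle$ over $[\,i/n,(i+1)/n\,]$, after the alternating cancellation, contributes the quadratic-variation-type term whose $\bar{\mathbb{E}}$-norm is comparable to $\int_0^T$ of the relevant $G$-type expressions. Concretely, for $A\in\mathbb{S}^d$ one has the elementary two-sided estimate $c_0\,\mathrm{trace}|A|\le G(A)+G(-A)\le C_0\,\mathrm{trace}|A|$ coming from $G(A)+G(-A)=\tfrac12\sup_{\gamma}\langle\gamma^2,A\rangle_{\rm HS}+\tfrac12\sup_{\gamma}\langle\gamma^2,-A\rangle_{\rm HS}$ and the fact that the range of $\gamma^2$ is $[\underline\sigma^2,\bar\sigma^2]$, so that $G(A)+G(-A)=\tfrac12\langle\bar\sigma^2-\underline\sigma^2,|A|\rangle_{\rm HS}$ after diagonalizing $A$; this is exactly where $c_0$ and $C_0$ enter. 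Combining this with the defining property $\d\langle B\rangle_t\le\bar\sigma^2\,\d t$ and $\d\langle B\rangle_t\ge\underline\sigma^2\,\d t$ gives, for step $\eta$,
\begin{equation*}
\epsilon\,\bar{\mathbb{E}}\!\int_0^T\!\|\eta_s\|_{\rm HS}\,\d s\ \le\ \liminf_{n}\bar{\mathbb{E}}\!\int_0^T\!\delta_n(s)(\eta_s,\d\langle B\rangle_s)_{\rm HS}\ \le\ C_0\,\bar{\mathbb{E}}\!\int_0^T\!\|\eta_s\|_{\rm HS}\,\d s
\end{equation*}
up to the replacement of $\bar\sigma^2,\underline\sigma^2$ by $\bar\sigma_\epsilon^2,\underline\sigma_\epsilon^2$ on the lower side, which is precisely the slack needed to make the lower bound strict.

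Next I would upgrade from $\liminf$ to a genuine limit. The idea is to show the sequence $a_n(\eta):=\bar{\mathbb{E}}\int_0^T\delta_n(s)(\eta_s,\d\langle B\rangle_s)_{\rm HS}$ is Cauchy: for step $\eta$ adapted to a partition finer than both $\{i/n\}$ and $\{i/m\}$ (say $n\mid m$), a direct computation shows $a_n(\eta)$ and $a_m(\eta)$ agree, and for general $n,m$ one compares through a common refinement, with the error controlled by $C_0\,\bar{\mathbb{E}}\int\|\eta_s\|_{\rm HS}\d s$ times a factor tending to $0$ as the mesh of the partition of $\eta$ is fixed while $n,m\to\infty$ — here one uses that $\eta$ being a step process makes the "boundary" contributions negligible. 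This gives existence of $\|\eta\|_{\mathbb{M}_G}:=\lim_n a_n(\eta)$ for step $\eta$, together with the same two-sided bound in the limit. Then, since step processes are dense in $M_G^1([0,T];\mathbb{S}^d)$ in the $\|\cdot\|_{M_G^1}$-norm, and $|a_n(\eta)-a_n(\eta')|\le C_0\,\bar{\mathbb{E}}\int\|\eta_s-\eta'_s\|_{\rm HS}\d s$ uniformly in $n$ (this is the upper bound applied to $\eta-\eta'$), the functional $\eta\mapsto\|\eta\|_{\mathbb{M}_G}$ extends uniquely and continuously to all of $M_G^1([0,T];\mathbb{S}^d)$, the limit over $n$ survives the extension, and the two-sided inequality $\epsilon\|\eta\|_{M_{G_\epsilon}^1}\le\|\eta\|_{\mathbb{M}_G}\le C_0\|\eta\|_{M_G^1}$ passes to the closure. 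That $\|\cdot\|_{\mathbb{M}_G}$ is a norm (not just a seminorm) follows from the lower bound: $\|\eta\|_{\mathbb{M}_G}=0$ forces $\|\eta\|_{M_{G_\epsilon}^1}=0$, hence $\eta=0$ in $M_G^1$; homogeneity and the triangle inequality are inherited from linearity of $a_n$ in $\eta$ and subadditivity of $\bar{\mathbb{E}}$.

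The main obstacle I anticipate is the alternating-sign bookkeeping in the first step: making rigorous that, after pairing $\delta_n(s)$ with $\d\langle B\rangle_s$ and summing, the leftover is genuinely an oscillation term comparable to $G(\cdot)+G(-\cdot)$ uniformly, rather than something that could collapse due to sign cancellations in $\langle B\rangle$ itself. This is where the non-degeneracy \eqref{Gnon} (equivalently $\underline\sigma^2>0$, giving $\delta=\lambda_0(\underline\sigma^2)>0$) is essential: it guarantees $\d\langle B\rangle_s$ genuinely spreads out over the interval $[\underline\sigma^2\,\d s,\bar\sigma^2\,\d s]$ under the representing family $\mathcal{P}$, so that the supremum defining $\bar{\mathbb{E}}$ actually sees the full oscillation and the lower bound with constant $\epsilon\le c_0$ cannot degenerate. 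A secondary technical point is justifying the interchange of $\lim_n$ with the $M_G^1$-completion limit, which I would handle by the uniform-in-$n$ Lipschitz estimate noted above (a standard $\varepsilon/3$ argument), so it is routine once the step-process case is in hand.
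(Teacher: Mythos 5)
First, note that the paper itself does not prove Lemma \ref{lem01}: it is quoted verbatim from \cite{Song2} (see also \cite{Song}), so your proposal has to be measured against the argument there. Your overall architecture (prove everything for step processes, then extend by density with a uniform-in-$n$ Lipschitz bound and an $\varepsilon/3$ argument) is the right shell, and the upper bound $\|\eta\|_{\mathbb{M}_G}\le C_0\|\eta\|_{M_G^1}$ can be run essentially as you say, provided one first subtracts the midpoint $(\bar\sigma^2+\underline\sigma^2)/2$ and kills that contribution with the Riemann--Lebesgue-type lemma (Theorem 3.3(i) of \cite{Song}). The genuine gap is the lower bound $\epsilon\|\eta\|_{M_{G_\epsilon}^1}\le\|\eta\|_{\mathbb{M}_G}$, which is the real content of the lemma, and your proposal contains no mechanism capable of producing it. The pointwise bounds $\underline\sigma^2\,\d t\le \d\langle B\rangle_t\le\bar\sigma^2\,\d t$ cannot be ``combined'' to bound $\bar{\mathbb{E}}\int_0^T\delta_n(s)(\eta_s,\d\langle B\rangle_s)_{\rm HS}$ from below by a positive multiple of an $L^1$-norm of $\eta$: the integrand changes sign, so pointwise domination only gives a lower bound of the useless form $-C\,\bar{\mathbb{E}}\int_0^T\|\eta_s\|_{\rm HS}\d s$. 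A lower bound on a supremum over measures requires exhibiting measures: in \cite{Song2,Song} one takes an arbitrary $P_\epsilon$ in the representing family of $G_\epsilon$ and perturbs its quadratic-variation density by $\pm\epsilon$ in step with $\delta_n$ and with an adapted step approximation of the sign/eigenstructure of $\eta$; the $\epsilon$-buffer between $[\underline\sigma_\epsilon^2,\bar\sigma_\epsilon^2]$ and $[\underline\sigma^2,\bar\sigma^2]$ is exactly what keeps the perturbed measure in the $G$-family, the unperturbed part disappears by the Riemann--Lebesgue lemma, and taking the supremum over $P_\epsilon$ is why the $G_\epsilon$-norm (and not the $G$-norm) appears on the left. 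Your appeal to non-degeneracy ``guaranteeing the supremum sees the full oscillation'' names the issue but does not resolve it; likewise your definiteness step is a non sequitur as written, since $\|\eta\|_{M_{G_\epsilon}^1}=0$ only says $\eta$ vanishes outside a $G_\epsilon$-polar set, and the $G_\epsilon$-family being smaller than the $G$-family, this does not by itself yield $\eta=0$ in $M_G^1$. An alternative elementary route for step processes, which your plan also lacks, is to identify the limit by backward induction using the tower property and $\bar{\mathbb{E}}_t\big[(A,\langle B\rangle_{t+h}-\langle B\rangle_t)_{\rm HS}\big]=2hG(A)$, giving $\lim_n \bar{\mathbb{E}}\int_0^T\delta_n(s)(\eta_s,\d\langle B\rangle_s)_{\rm HS}=\bar{\mathbb{E}}\int_0^T\big(G(\eta_s)+G(-\eta_s)\big)\d s$ for step $\eta$; either this computation or the measure construction must appear somewhere, and neither does.

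There are also secondary inaccuracies. The identity $G(A)+G(-A)=\tfrac12\langle\bar\sigma^2-\underline\sigma^2,|A|\rangle_{\rm HS}$ is false for non-commuting data: the set $\{\gamma^2:\gamma\in[\underline\sigma,\bar\sigma]\}$ is not the order interval $[\underline\sigma^2,\bar\sigma^2]$ (squaring is not operator monotone), and the supremum does not decouple along the spectral decomposition of $A$; only a two-sided comparison survives, e.g.\ the lower comparison $G(A)+G(-A)\ge c_0\,{\rm trace}|A|$ obtained by choosing $\gamma^2=\underline\sigma^2+2c_0P_\pm$ with $P_\pm$ spectral projections of $A$ (operator monotonicity of the square root keeps $\gamma\in[\underline\sigma,\bar\sigma]$), while your upper comparison in terms of ${\rm trace}|A|$ does not match the stated constant $C_0$ paired with $\|A\|_{\rm HS}$. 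Your claim that $a_n(\eta)=a_m(\eta)$ exactly when $n\mid m$ fails already for constant scalar $\eta$ (there is an $O(1/n)$ parity defect), though this is harmless for the Cauchy argument; and the uniform Lipschitz bound holds with constant $\|\bar\sigma^2\|_{\rm HS}$ rather than $C_0$ unless the midpoint subtraction is done first, which is fine for the extension but should be said. In summary, the density/extension scaffolding is sound, but the core of the lemma --- existence of the limit with the stated lower bound and the resulting norm property --- is not established by the proposal.
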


With Lemma \ref{lem01} in hand, we have the following
corollary which will play a crucial role in the analysis below.




\begin{cor}\label{cor01} {\rm Let $G$ be in \eqref{G(A)}, and  let $\eta\in M_G^1([0,T]; \mathbb{S}^d),$ $\zeta \in M_G^1([0,T])$. If
 $$\int_0^t (\eta_s,\d\langle B\rangle_s)_{\rm HS}=\int_0^t \zeta_s\d s,~~~t\in[0,T],$$
 then
$$\bar{\mathbb{E}}  \int_0^T \|\eta_s\|_{\rm HS}\d s=\bar{\mathbb{E}} \int_0^T |\zeta_s|\d
s=0.$$}
\end{cor}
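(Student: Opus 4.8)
The plan is to derive Corollary \ref{cor01} directly from Lemma \ref{lem01} by observing that the hypothesis forces the norm $\|\eta\|_{\mathbb{M}_G}$ to vanish, and then use the two-sided estimate in Lemma \ref{lem01} to conclude that $\bar{\mathbb{E}}\int_0^T\|\eta_s\|_{\rm HS}\,\d s=0$; the statement for $\zeta$ then follows from the relation tying $\zeta$ to $\eta$ together with the fact that $\langle B\rangle_t$ has bounded, nondegenerate density.

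First I would examine $\|\eta\|_{\mathbb{M}_G}=\lim_{n\to\infty}\bar{\mathbb{E}}\int_0^T\delta_n(s)(\eta_s,\d\langle B\rangle_s)_{\rm HS}$. By the hypothesis $\int_0^t(\eta_s,\d\langle B\rangle_s)_{\rm HS}=\int_0^t\zeta_s\,\d s$, we may replace the integrator: $\int_0^T\delta_n(s)(\eta_s,\d\langle B\rangle_s)_{\rm HS}=\int_0^T\delta_n(s)\zeta_s\,\d s$. Since $\delta_n$ is the fixed $\pm1$-valued step function from Section 2 and $|\delta_n|\le 1$, Fubini-type estimates under $\bar{\mathbb{E}}$ give
\begin{equation*}
\Big|\bar{\mathbb{E}}\int_0^T\delta_n(s)\zeta_s\,\d s\Big|\le \bar{\mathbb{E}}\int_0^T|\delta_n(s)|\,|\zeta_s|\,\d s.
\end{equation*}
The key point is that the oscillating sign structure of $\delta_n$ causes $\int_0^T\delta_n(s)\zeta_s\,\d s\to 0$ q.s.\ as $n\to\infty$ when $\zeta\in M_G^1([0,T])$: on each pair of adjacent intervals $(\frac{i}{n},\frac{i+1}{n}]$, $(\frac{i+1}{n},\frac{i+2}{n}]$ the contributions nearly cancel for an $L^1$-integrand, a standard Riemann–Lebesgue-type fact that one verifies first on simple processes in $M_G^{1,0}([0,T])$ and then extends by density using the $\|\cdot\|_{M_G^1}$-norm. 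Combined with uniform integrability coming from $\zeta\in M_G^1$, this yields $\|\eta\|_{\mathbb{M}_G}=0$.

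Next, invoking the left inequality of Lemma \ref{lem01} with any fixed $0<\epsilon\le c_0$ (note $c_0>0$ since $\underline\sigma<\bar\sigma$), we get $\epsilon\|\eta\|_{M_{G_\epsilon}^1([0,T];\mathbb{S}^d)}\le\|\eta\|_{\mathbb{M}_G}=0$, hence $\|\eta\|_{M_{G_\epsilon}^1([0,T];\mathbb{S}^d)}=0$, i.e.\ $\bar{\mathbb{E}}\int_0^T\|\eta_s\|_{\rm HS}\,\d s=0$ (the underlying $\sigma$-algebra and the quasi-sure class are the same; only the generating set $\Theta$ changes, which does not affect the vanishing of an $\bar{\mathbb{E}}$-norm). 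Finally, for $\zeta$: from $\bar{\mathbb{E}}\int_0^T\|\eta_s\|_{\rm HS}\,\d s=0$ we get $\|\eta_s\|_{\rm HS}=0$ for $\d s$-a.e.\ $s$, q.s.; since $\underline\sigma^2\le\frac{\d}{\d t}\langle B\rangle_t\le\bar\sigma^2$ with $\underline\sigma^2>0$, we have $|(\eta_s,\d\langle B\rangle_s)_{\rm HS}|\le\|\eta_s\|_{\rm HS}\,\|\bar\sigma^2\|_{\rm HS}\,\d s$, so $\int_0^t(\eta_s,\d\langle B\rangle_s)_{\rm HS}=0$ for all $t$, q.s., and therefore $\int_0^t\zeta_s\,\d s=0$ for all $t$, forcing $\zeta_s=0$ for a.e.\ $s$, q.s., whence $\bar{\mathbb{E}}\int_0^T|\zeta_s|\,\d s=0$.

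\emph{The main obstacle} I anticipate is justifying rigorously that $\bar{\mathbb{E}}\int_0^T\delta_n(s)\zeta_s\,\d s\to 0$ for $\zeta\in M_G^1([0,T])$ under the sublinear expectation; one must be careful that passing to the limit inside $\bar{\mathbb{E}}$ and exchanging sum and integral are legitimate in the $G$-framework, which is why the argument should be set up first for step processes in $M_G^{1,0}$ (where the cancellation is an elementary finite computation) and then extended by the density of $M_G^{1,0}$ in $M_G^1$ together with the continuity of the maps $\eta\mapsto\|\eta\|_{\mathbb{M}_G}$ and $\zeta\mapsto\bar{\mathbb{E}}\int_0^T|\zeta_s|\,\d s$ asserted in Lemma \ref{lem01}. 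Alternatively, since Lemma \ref{lem01} already guarantees that $\|\eta\|_{\mathbb{M}_G}$ exists and $\|\cdot\|_{\mathbb{M}_G}\le C_0\|\cdot\|_{M_G^1}$, the cleanest route is: approximate $\eta$ in $M_G^1([0,T];\mathbb{S}^d)$ by step processes $\eta^{(k)}$, observe the defining limit for each $\eta^{(k)}$ is computed explicitly and equals zero by the telescoping/cancellation property when the integrator is absolutely continuous with bounded density, and then let $k\to\infty$.
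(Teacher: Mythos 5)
Your overall strategy is the same as the paper's: use the hypothesis to rewrite the defining limit of $\|\eta\|_{\mathbb{M}_G}$ as $\lim_{n\to\infty}\bar{\mathbb{E}}\int_0^T\delta_n(s)\zeta_s\,\d s$, show this limit vanishes, deduce that $\eta$ is the zero element of $M_G^1([0,T];\mathbb{S}^d)$, and then conclude for $\zeta$. For the vanishing of the limit the paper simply cites Song's Theorem 3.3(i); your step-process-plus-density argument is an acceptable way to reprove that fact (your intermediate appeal to ``q.s.\ convergence plus uniform integrability'' would not stand alone under a sublinear expectation, where dominated convergence is not available, but the clean route you describe afterwards is correct).

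The genuine gap is in the step from $\|\eta\|_{M^1_{G_\epsilon}([0,T];\mathbb{S}^d)}=0$ to $\bar{\mathbb{E}}\int_0^T\|\eta_s\|_{\rm HS}\,\d s=0$. The norm $\|\cdot\|_{M^1_{G_\epsilon}}$ is taken under the $G_\epsilon$-expectation, whose representing family of measures corresponds to volatilities in $[\underline{\sigma}_\epsilon,\bar{\sigma}_\epsilon]$, a strictly smaller set than $[\underline{\sigma},\bar{\sigma}]$; hence $\bar{\mathbb{E}}^{G_\epsilon}\le\bar{\mathbb{E}}$ and the associated capacity $c_\epsilon$ is dominated by $c$. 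Your parenthetical justification that ``the quasi-sure class is the same, only $\Theta$ changes'' is false: a $c_\epsilon$-polar set need not be $c$-polar (for instance, the event $\{\langle B\rangle_T=\bar{\sigma}^2T\}$ carries full mass under the measure with constant maximal volatility, which lies in $\mathcal{P}$ but outside every $\mathcal{P}_{G_\epsilon}$). So the vanishing of the $G_\epsilon$-norm does not by itself give the vanishing of the $G$-norm. The paper avoids this entirely by invoking the other assertion of Lemma \ref{lem01}, namely that $\|\cdot\|_{\mathbb{M}_G}$ is a \emph{norm} on $M_G^1([0,T];\mathbb{S}^d)$: then $\|\eta\|_{\mathbb{M}_G}=0$ directly forces $\eta$ to be the zero element of $M_G^1$, i.e.\ $\bar{\mathbb{E}}\int_0^T\|\eta_s\|_{\rm HS}\,\d s=0$, $c$-q.s.\ and a.e. Your route could be repaired, e.g.\ by letting $\epsilon\downarrow0$ and proving $\bar{\mathbb{E}}^{G_\epsilon}[X]\to\bar{\mathbb{E}}[X]$ for $X\in L_G^1$, but you have not supplied such an argument. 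Once the conclusion for $\eta$ is obtained under the original $G$-expectation, your final deduction that $\int_0^t\zeta_s\,\d s=0$ for all $t$ and hence $\bar{\mathbb{E}}\int_0^T|\zeta_s|\,\d s=0$ coincides with the paper's.
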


\begin{proof} According to  Lemma \ref{lem01} and   \cite[Theorem 3.3 (i)]{Song}, we
deduce that
\begin{equation*}
\|\eta\|_{\mathbb{M}_G}=\lim_{n\to \infty}\bar{\mathbb{E}}  \int_0^T
\delta_n(s)(\eta_s,\d\langle B\rangle_s)_{\rm HS} =\lim_{n\to
\infty}\bar{\mathbb{E}} \int_0^T \delta_n(s)\zeta_s\d s=0.
\end{equation*}
Recall from  Lemma \ref{lem01} that  $\|\eta\|_{\mathbb{M}_G}$ is a
norm, then, $c$-q.s.,  $\eta_t\equiv{\bf0}_{d\times d}$, a.e. $t\in
[0,T]$. Therefore, $\bar{\mathbb{E}}  \int_0^T \|\eta_s\|_{\rm HS}\d
s =0,$ which leads to $\bar{\mathbb{E}}  \int_0^T |\zeta_s|\d s =0$.
\end{proof}

 Consider the following It\^{o} process in
 $(\Omega_T,L_G^p(\Omega_T),\bar{\mathbb{E}})$
    \begin{equation}\label{Peng}
    X_t=\int_0^t \xi_r\d r+\sum_{i,j=1}^d\int_0^t \eta_r^{ij} \d\langle B^i,B^j\rangle_r +\sum_{i=1}^d\int_0^t \zeta_r^i \d
    B_r^i,~~~t\geq0,
    \end{equation}
where $\xi,  \eta^{ij} \in M_G^1([0,T];\R^d)$ with $\eta^{ij}
=\eta^{ji}$,   and $\zeta^i\in
    H_G^1([0,T];\R^d)$.

Now we can state the following decomposition theorem.
\begin{thm}\label{thm} \label{Thm2.3}
 For $G$   in \eqref{G(A)} and let $X_t $ be in \eqref{Peng}. Then
   $X_t={\bf0}_d$ for all $t\in[0,T] $ if and only if on $\Omega_T\times [0,T]$ it holds  $c\times \d t-q.s. \times a.e.$,
    $\xi_t={\bf0}_d,  \eta_t^{ij}={\bf0}_{d}, \zeta_t^i={\bf0}_{d}$, $i,j=1,
    \cdot\cdot\cdot,d.$
\end{thm}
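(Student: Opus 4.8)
The plan is to prove the nontrivial direction: if $X_t = {\bf 0}_d$ for all $t\in[0,T]$, then all three integrand processes vanish $c\times\d t$-q.s.$\times$a.e. The converse is trivial. I would work componentwise, so fix a coordinate $k\in\{1,\dots,d\}$ and write the $k$-th component of \eqref{Peng} as a scalar $G$-It\^o process
\begin{equation*}
0 = X_t^k = \int_0^t \xi_r^k\,\d r + \sum_{i,j=1}^d\int_0^t \eta_r^{ij,k}\,\d\langle B^i,B^j\rangle_r + \sum_{i=1}^d\int_0^t \zeta_r^{i,k}\,\d B_r^i,\qquad t\in[0,T].
\end{equation*}
The first step is to isolate the martingale part: the two finite-variation terms $\int_0^t\xi_r^k\,\d r$ and $\sum_{i,j}\int_0^t\eta_r^{ij,k}\,\d\langle B^i,B^j\rangle_r$ are (under every $P\in\mathcal P$) continuous processes of bounded variation, while $\sum_i\int_0^t\zeta_r^{i,k}\,\d B_r^i$ is a $G$-martingale with vanishing quadratic variation only if its integrand is zero. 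Using the $G$-It\^o isometry (Peng's theory, e.g.\ \cite{peng4}) together with the non-degeneracy $\underline\sigma^2 < \frac{\d}{\d t}\langle B\rangle_t$ coming from \eqref{G(A)}, the identity $X_t^k\equiv 0$ forces $\bar{\mathbb E}\int_0^T \sum_i(\zeta_r^{i,k})^2\,\d r = 0$ via a standard argument (quadratic variation of the martingale part must vanish because the left side has none), hence $\zeta^{i,k}=0$ for all $i$, $c\times\d t$-q.s.$\times$a.e.

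Once the martingale part is eliminated, we are left with the purely finite-variation identity
\begin{equation*}
\int_0^t \xi_r^k\,\d r + \sum_{i,j=1}^d\int_0^t \eta_r^{ij,k}\,\d\langle B^i,B^j\rangle_r = 0,\qquad t\in[0,T].
\end{equation*}
The second step is to rewrite $\sum_{i,j}\int_0^t\eta_r^{ij,k}\,\d\langle B^i,B^j\rangle_r$ as $\int_0^t(\Eta_r^k,\d\langle B\rangle_r)_{\rm HS}$ where $\Eta_r^k := (\eta_r^{ij,k})_{ij}$ is the symmetric matrix-valued process with entries $\eta^{ij,k}=\eta^{ji,k}\in M_G^1([0,T])$; thus $\Eta^k\in M_G^1([0,T];\mathbb S^d)$. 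Then the displayed identity says exactly $\int_0^t(\Eta_r^k,\d\langle B\rangle_r)_{\rm HS} = \int_0^t(-\xi_r^k)\,\d r$, which is the hypothesis of Corollary \ref{cor01} with $\eta = \Eta^k$ and $\zeta = -\xi^k$. Applying that corollary gives $\bar{\mathbb E}\int_0^T\|\Eta_s^k\|_{\rm HS}\,\d s = \bar{\mathbb E}\int_0^T|\xi_s^k|\,\d s = 0$, hence $\eta^{ij,k}=0$ for all $i,j$ and $\xi^k=0$, $c\times\d t$-q.s.$\times$a.e. Ranging over $k=1,\dots,d$ yields all the claimed vanishing statements.

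I expect the main obstacle to be the first step, namely rigorously peeling off the $G$-martingale part and concluding $\zeta\equiv 0$. In the classical linear setting this is immediate from $\langle X\rangle = 0$, but under the $G$-expectation one must be careful: the quadratic-variation argument has to be run under the sublinear expectation (or simultaneously under all $P\in\mathcal P$), and one needs that a $G$-martingale of the form $\sum_i\int_0^\cdot\zeta_r^i\,\d B_r^i$ which is also of finite variation must be identically zero. This can be handled by noting that such a process has quadratic variation $\int_0^t(\zeta_r\zeta_r^*,\d\langle B\rangle_r)_{\rm HS}$ (here invoking the non-degeneracy from \eqref{G(A)} so that the quadratic variation controls $\int_0^t|\zeta_r|^2\,\d r$ from below) while simultaneously having zero quadratic variation because it equals minus the finite-variation part; this last step is itself an instance of the reasoning in Corollary \ref{cor01} applied to $\zeta\zeta^*$. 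Everything else is bookkeeping over components and symmetry of $\eta$.
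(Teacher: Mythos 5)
Your proposal is correct and follows essentially the same route as the paper: first kill the martingale part by computing its quadratic variation (which must vanish since $X^k\equiv 0$) and using the lower bound $\d\langle B\rangle_t\geq \underline{\sigma}^2\,\d t$ to force $\zeta\equiv 0$, then apply Corollary \ref{cor01} to the remaining finite-variation identity $\int_0^t(\eta^k_s,\d\langle B\rangle_s)_{\rm HS}=\int_0^t(-\xi^k_s)\,\d s$ to conclude $\eta^k=0$ and $\xi^k=0$. The step you flag as the main obstacle is handled in the paper exactly as you suggest, by the bracket computation $\langle\sum_i\int_0^\cdot\zeta^{ki}_s\,\d B^i_s\rangle_t=\int_0^t((\zeta^k_s)^*\zeta^k_s,\d\langle B\rangle_s)_{\rm HS}\geq\int_0^t\langle\underline{\sigma}^2(\zeta^k_s)^*,(\zeta^k_s)^*\rangle\,\d s$ together with $\underline{\sigma}^2>0$.
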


\begin{proof}
The proof of the sufficiency is trivial, it suffices to prove
the necessity. Assume  $X_t={\bf0}_{d}$ for   $t\in[0,T]$. Then \eqref{Peng} is
equivalent to
\begin{equation}\label{reduced}
     \int_0^t\xi_s^k\d s+ \sum_{i,j=1}^d\int_0^t \eta_r^{kij} \d\langle B^i,B^j\rangle_r +\sum_{j=1}^d\int_0^t \zeta_s^{kj} \d B_s^j={\bf0}_{d},
     ~k=1,\cdots,d,   \ t\in[0,T],
    \end{equation}
where $\xi^k_\cdot$ (resp. $\eta_\cdot^{kij})$ denotes the $k$-th
component of the column vector $\xi$ (resp. $\eta_\cdot^{ij})$.
Taking quadratic processes w.r.t. $\int_0^\cdot \zeta_s^{ki} \d
B_s^i $ on both side of \eqref{reduced}, we deduce
that
\begin{equation*}
\begin{split}
    {0}&= \sum_{i,j=1}^d  \left\langle \int_0^\cdot \zeta_s^{kj} \d B_s^j,\int_0^\cdot \zeta_s^{ki} \d B_s^i \right\rangle_t = \left\langle \sum_{i=1}^d
    \int_0^\cdot \zeta_s^{ki }\d B_s^i \right\rangle_t\\
&= \sum_{i,j=1}^d\int_0^t \zeta_s^{kj} \zeta_s^{ki} \d\langle B^i, B^j\rangle_s=\int_0^t ((\zeta_s^k)^*\zeta_s^k,  \d\langle B \rangle_s)_{\rm HS}\\
&=\int_0^t \langle  \d\langle B \rangle_s(\zeta_s^k )^*,
(\zeta_s^k)^*\rangle \geq\int_0^t \langle  \underline{\sigma}^2 (\zeta_s^k )^*,
(\zeta_s^k)^*\rangle \d s
\geq0
\end{split}
    \end{equation*}
with $\zeta^k:=(\zeta^{k1},\cdots,\zeta^{kd})$.
Since $\underline{\sigma}^2>0,$ this
   implies   $\zeta_t={\bf0}_{d\times d}$, a.e.
$t\in [0,T]$.

It remains to show that $\xi_t={\bf0}_{d}$ and $\eta_t={\bf0}_{d}$.
In fact, since $\zeta_t={\bf0}_{d}$, we have
 \begin{equation*}
 \int_0^t-\xi_s^k\d s= \sum_{i,j=1}^m\int_0^t \eta_s^{kij} \d\langle B^i,B^j\rangle_s=  \int_0^t(\eta_s^k, \d\langle B\rangle_s)_{\rm HS},
  \ k=1, \cdot\cdot\cdot, d,~~t\in[0,T]
    \end{equation*}
with $\eta_\cdot^k=(\eta_\cdot^{kij})_{ij}$. By   Corollary
\ref{cor01}, this implies
 \begin{equation*}
 \bar{\mathbb{E}}\int_0^T|\xi_s^k|\d s=  \bar{\mathbb{E}} \int_0^T\|\eta_s^k\|_{\rm HS} \d s=0, \ k=1, \cdot\cdot\cdot, d.
    \end{equation*}
Thus, we conclude that $c$-q.s. for a.e.  $t\in [0,T],$ $\eta_t^k=
{\bf0}_{d\times d}$ and $\xi_t^k=0, \ k=1, \cdot\cdot\cdot, d.$
Therefore, $\xi_t=\eta_t^{ij}={\bf0}_{d}$.

\end{proof}

\section{Characterization of Path Independence}



The main result of the paper is the  following.
\begin{thm}\label{theorem1}
Let $G$ be in \eqref{G(A)}.  Then
 $A_{s,t}^{f,g}$ is path independent in the sense of \eqref{eq1} for some $V\in
C^{1,2}(\mathbb{R}_{+}\times\mathbb{R}^d)$
 if and only if
 \begin{equation}\label{thm2}
 \begin{cases}
 \frac{\partial }{\partial t}V(t,x)
 =\beta G(f)(t,x)-\<\nn V,b\>(t,x),\\
\aa f_{ij}(t,x)=\Big(\<\nn V,h_{ij}\>+ \frac{1}{2}\<\si_i,(\nn^2V)\si_j\>  \Big)(t,x),   \\
 g(t,x)=(\si^*\nn V)(t,x),   \ \  \ (t,x) \in [0,T]\times \mathbb{R}^d,
\end{cases}
 \end{equation}
 where $i,j=1,\cdots,d$, and $\sigma_i$ stands for the $i$-th column of   $\sigma$.
 \end{thm}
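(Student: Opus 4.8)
The plan is to obtain the equivalence by applying It\^o's formula for $G$-semimartingales to $V(t,X_t)$ and then matching the resulting decomposition of $V(t,X_t)-V(s,X_s)$, term by term, against the decomposition \eqref{eq2} of $A_{s,t}^{f,g}$; what turns this matching into an ``if and only if'' is precisely the decomposition Theorem \ref{Thm2.3}. Concretely, for $V\in C^{1,2}(\R_+\times\R^d)$ and any solution $(X_r)_{r\in[s,T]}$ of \eqref{kerner*}, using $\d\<X^i,X^j\>_r=\sum_{k,l}(\si_{ik}\si_{jl})(r,X_r)\,\d\<B^k,B^l\>_r$, I would first write
\begin{equation*}
\begin{split}
V(t,X_t)-V(s,X_s)&=\int_s^t\big(\pp_tV+\<\nn V,b\>\big)(r,X_r)\,\d r+\int_s^t\<(\si^*\nn V)(r,X_r),\d B_r\>\\
&\quad+\sum_{i,j=1}^d\int_s^t\Big(\<\nn V,h_{ij}\>+\ff12\<\si_i,(\nn^2V)\si_j\>\Big)(r,X_r)\,\d\<B^i,B^j\>_r,
\end{split}
\end{equation*}
noting that the coefficient of $\d\<B^i,B^j\>_r$ is symmetric in $(i,j)$ since $h_{ij}=h_{ji}$ and $\nn^2V$ is symmetric, as required to apply Theorem \ref{Thm2.3}; a routine localization along $\tau_n:=\inf\{r:|X_r|\ge n\}$ puts all integrand processes into the spaces $M_G^1$, $H_G^1$ used there. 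The sufficiency is then immediate: if \eqref{thm2} holds, substituting its three equations into the display collapses the $\d r$-coefficient to $\bb G(f)$, each $\d\<B^i,B^j\>_r$-coefficient to $\aa f_{ij}$, and the $\d B_r$-integrand to $g$, so $V(t,X_t)-V(s,X_s)=A_{s,t}^{f,g}$ for every solution and every $t\in[s,T]$, i.e.\ \eqref{eq1}.

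For the necessity, assume \eqref{eq1} holds for some $V\in C^{1,2}$. Subtracting the above It\^o formula from \eqref{eq2} gives, for every starting time $s\in[0,T]$ and every solution $(X_r)_{r\in[s,T]}$, a scalar It\^o process which, after appending zero components, has the form \eqref{Peng} and vanishes identically on $[s,T]$; its $\d\<B^i,B^j\>_r$-coefficient $\<\nn V,h_{ij}\>+\ff12\<\si_i,(\nn^2V)\si_j\>-\aa f_{ij}$ is again symmetric in $(i,j)$, using also $f=f^\ast$. Theorem \ref{Thm2.3} then forces the three equations of \eqref{thm2}, with $(t,x)$ replaced throughout by $(r,X_r)$, to hold $c\times\d t$-q.s.\ on $\OO_T\times[s,T]$.

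It remains to upgrade these identities --- which so far hold only along solutions and only for a.e.\ time --- to the pointwise system \eqref{thm2} on $[0,T]\times\R^d$. Here I would exploit the flexibility of Definition \ref{defn1}: fixing $(t_0,x_0)\in[0,T)\times\R^d$ and taking $s=t_0$ with a solution started deterministically at $X_{t_0}=x_0$, the previous step yields, for q.s.\ $\oo$, the three identities for a.e.\ $r\in(t_0,T]$; letting $r\downarrow t_0$ along such $r$ and using $X_r(\oo)\to x_0$ together with the continuity in $(t,x)$ of $b,h_{ij},\si,f,g,\nn V,\nn^2V$ and the Lipschitz continuity of $G$, one gets \eqref{thm2} at $(t_0,x_0)$; the boundary case $t_0=T$ follows by continuity. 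I expect this last passage --- from the q.s., a.e.-in-time statement along solutions to the genuinely pointwise PDE --- to be the only real obstacle, and it is exactly here that permitting \emph{arbitrary} starting times and starting points in the notion of path independence is used in an essential way; everything else (the $G$-It\^o formula, the localization needed for Theorem \ref{Thm2.3}) is routine, with the decomposition Theorem \ref{Thm2.3} being the true engine of the argument.
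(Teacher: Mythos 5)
Your proposal follows essentially the same route as the paper: apply It\^o's formula to $V(t,X_t)$, match the $\d r$-, $\d\langle B^i,B^j\rangle_r$- and $\d B_r$-coefficients against \eqref{eq2} via the decomposition Theorem \ref{Thm2.3}, and recover the pointwise system \eqref{thm2} by starting the solution at an arbitrary $(t_0,x_0)$ and passing to the starting time by continuity (the paper phrases this as letting the a.e.-in-$t$ identities hold at $t=s$ by continuity, which is the same limit you make explicit with $r\downarrow t_0$).

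The one point to be careful about is your aside that ``a routine localization along $\tau_n=\inf\{r:|X_r|\ge n\}$ puts all integrand processes into $M_G^1$, $H_G^1$.'' Under $G$-expectation this is not routine: exit times are in general not quasi-continuous, the indicator of the stochastic interval $[0,\tau_n]$ need not belong to $M_G^1([0,T])$, and Theorem \ref{Thm2.3} (resting on Song's uniqueness of representation) is stated for integrands in $M_G^1([0,T])$, $H_G^1([0,T])$ on the full deterministic interval, not on a random one. The paper avoids stopping times altogether and instead checks membership of the integrands in $M_G^1$ and $H_G^1$ directly from the Lipschitz condition \eqref{Lip} and $X\in M_G^2([0,T];\R^d)$; you should do the same (or impose/derive growth bounds on $\nn V$, $\nn^2 V$), since the localization as stated would not let you invoke Theorem \ref{Thm2.3}. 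Apart from this technical point, the argument and its engine coincide with the paper's.
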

 \begin{proof} We first prove the necessity.  For any $(s,x)\in[0,T]\times \R^d$, let  $(X_t)_{t\geq s} $ solves \eqref{kerner*} with $X_s=x$.
 Since $(A_{s,t}^{f,g})_{t\in[s,T]}$ is path independent in the sense of \eqref{eq1}, it follows that
\begin{eqnarray}\label{2}
 \d V(t,X_t)&=&\beta G(f)(t,X_t)\d t+\aa\sum_{i,j=1}^d f_{ij}(t, X_t)\d\langle B^i,B^j\rangle_t\\ \nonumber
 &&+\langle g(t, X_t),\d B_t\rangle, \ t\in [s,T].
 \end{eqnarray}
On the other hand, by It\^{o}'s formula, we derive that
\begin{equation}\label{1}
\begin{split}
 \d V(t,X_t)&=\left(\frac{\partial }{\partial t}V+\<\nn V,b\> \right)(t,X_t)\d t +\<(\si^*\nn V)(t,X_t),\d B_t\> \\
 &\quad+\sum_{i,j=1}^d\Big(\<\nn V,h_{ij}\>+ \frac{1}{2}\<\si_i,(\nn^2V)\si_j\>  \Big)(t,X_t)\d \langle B^i, B^j\rangle_t, \ t\in [s,T].
\end{split}
\end{equation}
Since coeffieients $b,h$ and $\sigma$ satisfy the Lipschitz condition in \eqref{Lip}, and the solution of \eqref{kerner*} satisfies  $X_t\in  M_G^2([0,T]; \R^d)$,  it's  not difficult to verify
$\left(\frac{\partial }{\partial t}V+\<\nn V,b\> \right)(t,X_t)\in M_G^1([0,T]) $, $\Big(\<\nn V,h_{ij}\>+ \frac{1}{2}\<\si_i,(\nn^2V)\si_j\>  \Big)(t,X_t)\in  M_G^1([0,T])$,
and $(\si_i^\ast\nn V)(t,X_t)\in H_G^1([0,T])$,
thus   hypotheses of Theorem \ref{Thm2.3} are satisfied.
Combining \eqref{2} and \eqref{1}, and applying  Theorem  \ref{thm}
for the process $V(t,X_t)$, we obtain  $c$-q.s. for  a.e. $t\in[s,T]$,
 \begin{equation}\label{3}
 \begin{cases}
 \Big(\frac{\partial }{\partial t}V+\<\nn V,b\> \Big)(t,X_t)=
  \beta G(f)(t,X_t),\\
\aa f_{ij}(t,X_t)=\Big(\<\nn V,h_{ij}\>+ \frac{1}{2}\<\si_i,(\nn^2V)\si_j\>  \Big)(t,X_t),   \\
 g(t,X_t)=(\si^*\nn V)(t,X_t).
\end{cases}
\end{equation}
Since all terms in \eqref{3} are continuous in $t$, these equations hold $c$-q.s. at $t=s$, so by $X_s=x$, we have
 \begin{equation}\label{4}
 \begin{cases}
 \Big(\frac{\partial }{\partial t}V+\<\nn V,b\> \Big)(s,x)=
  \beta G(f)(s,x),\\
\aa f_{ij}(s,x)=\Big(\<\nn V,h_{ij}\>+ \frac{1}{2}\<\si_i,(\nn^2V)\si_j\>  \Big)(s,x),   \\
 g(s,x)=(\si^*\nn V)(s,x).
\end{cases}
\end{equation}
Due to the arbitrariness of $s$ and $x$, we prove \eqref{thm2}.

Next, for  the sufficiency, taking advantage of \eqref{thm2}, we deduce from \eqref{1} that \eqref{2} holds true. By taking stochastic integration  we prove \eqref{eq1}, and   therefore complete the proof.
\end{proof}

Let us comparison this result with known ones in the linear expectation setting.
\begin{rem}\label{rem}
Comparing with  the Girsanov transform in the linear expectation
setting as mentioned in Introduction, we take for instance
$\alpha=1, \beta=-1$, $h_{ij}=0$ and
\begin{equation}\label{ast}
 f_{ii}=\frac{1}{d}|g|^2, \ 1\leq i\leq d.
\end{equation}
 When $\underline{\sigma}^2=\bar{\sigma}^2={\bf1}_{d\times d}$,  this goes back to  the classic linear expectation, $(B_t)_{t\ge0}$ is a $d$-dimensional standard Brownian motion defined
on the probability space $(\Omega,\F,\P)$,  we have $\langle B^i,B^j \rangle_r=\delta_{ij}r$,  and
\begin{equation*}\label{G}
  G(f)=\frac{|g|^2}{2d}\mbox{trace}[{\bf{1}}_{d\times d}]=\frac{{|g|^2}}{2}.
\end{equation*}
So
\begin{eqnarray*}\label{A}
A_{s,t}^{{ f},g}:&=&\beta \int_s^t G(f)(r, X_r)\d r+\alpha\sum_{i,j=1}^d\int_s^t  f_{ij}(r, X_r)\d\langle B^i,B^j\rangle_r\\ \nonumber
&&+\int_s^t\langle g(r, X_r),\d B_r\rangle\\ \nonumber
&=& \frac{1}{2}\int_s^t |g(r, X_r)|^2\d r
+\int_s^t\langle g(r, X_r),\d B_r\rangle
\end{eqnarray*}
gives the weighted probability $\exp\{{-A_{s,t}^{f,g}}\}\d\mathbb{P}$ in the Girsanov theorem.

 By taking   $\alpha=1, \beta=-1$, $h_{ij}=0$ and $f$ in  \eqref{ast},
 the assertion of Theorem \ref{theorem1} becomes that  $A_{s,t}^{f,g}$ is path independent in the sense of \eqref{eq1} for some  $V\in C^{1,2}(\R_+\times\R^d;\R)$   if and only if
\begin{equation*}\label{8}
 \begin{cases}
 \frac{\partial }{\partial t}V(t,x)
 =-\frac{1}{2} G\Big((\<\si_i,(\nn^2V)\si_j\> )_{1\leq i,j\leq d}\Big)(t,x)-\<\nn V,b\>(t,x),\\
 f_{ij}(t,x)=\frac{1}{2}\<\si_i,(\nn^2V)\si_j\>  (t,x),   \\
 g(t,x)=(\si^*\nn V)(t,x),   \ \  \ (t,x) \in [0,T]\times \mathbb{R}^d.
\end{cases}
 \end{equation*}
 It is easy to see that this generalizes the main results derived  in
\cite{Wang3, Wang,Wang1,Wang2} where $h\equiv0$ and $g$ is given by $\si^{-1}b$, under additional condition ensuring the existence of $\si^{-1}b$, i.e., $b$ takes value in $\{  \si v: v\in \R^d\}$.

However, since $G$ is a linear function in the linear expectation case,  Theorem \ref{theorem1} does not directly apply to existing results,  but extends  them to the non-degenerate $G$-setting.
\end{rem}


Moreover, the nonlinear PDE included in \eqref{theorem1} covers the $G$-heat equation as a special example.

\begin{rem}
When  $h=b=0$, $\alpha=1$, and $\beta=-2$, the PDE  in \eqref{thm2} for $V$ reduces to the following $G$-heat equation
$$\frac{\partial }{\partial t} {V}(t,x)
+G\Big(\big(\<\si_i,(\nn^2V)\si_j\> (t,x)\big)_{1\leq i,j\leq m}\Big)=0,$$
which is one of main motivations for the study of $G$-Brownian motion.
\end{rem}
\section{An   Example with $\alpha=0$}
Now we provide an example to demonstrate our main result for $\alpha=0$. As indicated in Remark \ref{rem1} that when $\alpha\neq0 $ the study can be reduced to  $\alpha=1.$

\begin{exa} \label{exa1}
 Let $d=1$, $\alpha=0,$  and $ \beta=2.$    By Theorem \ref{theorem1}, $A_{s,t}^{f,g}$ is path independent if and only if
\begin{equation} \label{a}
 \begin{cases}
 f(t,x)= \frac{1}{2}G^{-1}\left(\frac{\partial V}{\partial t}+b\frac{\partial V}{\partial x}\right)(t,x),\\
\left(h\frac{\partial V}{\partial x}\right)(t,x)+\frac{1}{2}\left(\si^2\frac{\partial^2 V}{\partial x^2}\right)(t,x)=0,\\
 g(t,x)=\left(\si^*\frac{\partial V}{\partial x}\right)(t,x),    \ \  \ (t,x) \in [0,T]\times \mathbb{R}.
\end{cases}
 \end{equation}
We may solve $V$ by using $\mathcal{L}_t$-Harmonic function:
\begin{equation}\label{ex1}
  \mathcal{L}_tV_0(x)=0, \ V_0\in C^{1,2}(\R\to\R), \ t\geq0,
\end{equation}
where  $\mathcal{L}_t=h(t,x) \frac{\partial }{\partial x}+\frac{1}{2}\si^2(t,x)\frac{\partial^2 }{\partial x^2}$.

For any $\mathcal{L}_t$-Harmonic function $V_0$,  $t\geq0,$ let $V(t,x)=\varphi(t)V_0(x)$ for some $\varphi\in C^{1,2}(\R_+\to\R)$. Then $V$ solves the above PDE  in \eqref{a}.
Therefore, $A_{s,t}^{f,g}$ is path independent if
\begin{equation}\label{ex2}
 \begin{cases}
 f(t,x)= \frac{1}{2}G^{-1}\left(\varphi'(t)V_0(x)
+b(t,x)\varphi(t) V_0'(x)\right),\\
 g(t,x)=\si(t,x)\varphi(t) V_0'(x),   \ \  \ (t,x) \in [0,T]\times \mathbb{R}.
\end{cases}
 \end{equation}
To present specific choices of $V_0$, let $h$ and $\si$ do not depend on $t$. Then \eqref{ex1} becomes
\begin{equation*}
  h(x)V_0'(x)+\frac{1}{2}\si^2(x)V_0''(x)=0.
\end{equation*}
 When $\si^2(x)\neq 0,$ this is equivalent to
 \begin{equation*}
 V_0''(x)=-2\frac{h(x)}{\si^2(x)}V_0'(x).
 \end{equation*}
Thus,
\begin{equation*}
  V_0(x)=V_0(0)+V_0'(0)\int_0^x\e^{-2\int_0^u\frac{h(r)}{\si^2(r)}\d r}\d u.
\end{equation*}
In particular, when $\si(x)=1, h(x)=x$, we have
\begin{equation*}
  V_0(x)=V_0(0)+V_0'(0)\int_0^x\e^{-u^2}\d u,
\end{equation*}
which is related to the Gaussian distribution.
\end{exa}

\paragraph{Acknowledgement.} The authors are grateful  to  Professor Feng-Yu Wang for his guidance, valuable suggestion and comments on  earlier versions of the paper,
as well as Professor Yongsheng Song for his patient help and corrections.

\beg{thebibliography}{99}

\bibitem{B08} Bishwal J P N, Parameter Estimation in Stochastic Differential Equations,  Springer, Berlin, 2008.

\bibitem{BS} Black F, Scholes M, The pricing of options and corporate liabilities,
 J   Polit  Econ, 1973, 81: 637--654.

\bibitem{BR} Bai X P, Buckdahn R, Inf-convolution of $G$-expectations,  Sci China Math,  2010, 53: 1957-1970.

\bibitem{15} Denis L, Hu M S, Peng S G, Function spaces and capacity related to a sublinear expectation: application to $G$-Brownian motion pathes, Potential Anal, 2011, 34: 139--161.

\bibitem{FWZ} Feng C R,  Wu P Y,  Zhao H Z,
Ergodicity of Invariant Capacity, 2018, arXiv: 1806.03990.

\bibitem{FOZ} Feng C R,  Qu B Y,  Zhao H Z,  A Sufficient and Necessary Condition of PS-ergodicity of Periodic Measures and Generated Ergodic Upper
Expectations, 2018,  arXiv: 1806.03680.

\bibitem{GRR} Gu Y F, Ren Y,  Sakthive R, Square-mean pseudo almost automorphic mild solutions for stochastic evolution equations driven by $G$-Brownian motion,  Stoch Anal  Appl, 2016, 34: 528--545.

\bibitem{HJP}
 Hu M S,  Ji S L, Peng S G, Song Y S, Comparison theorem, Feynman-Kac formula and Girsanov transformation for BSDEs driven by $G$-Brownian motion,
 Stochastic Process Appl, 2014, 124: 1170--1195.

 \bibitem{HJ}   Hu M S,  Ji S L, Dynamic programming principle for stochastic recursive optimal control problem driven by a $G$-Brownian motion,
Stochastic Process Appl, 2017, 127:  107--134.

\bibitem{HJY}  Hu M S,  Ji S L, Yang S Z, A stochastic recursive
optimal control problem under the $G$-expectation framework,
  Appl Math Optim, 2014, 70:  253--278.


\bibitem{peng2}  Peng S G, $G$-Brownian motion and dynamic risk measures under volatility
uncertainty, 2007, arXiv: 0711.2834v1.

\bibitem{peng1} Peng S G, $G$-expectation, $G$-Brownian motion and related stochastic
calculus of It\^{o} type, in: Stochastic Analysis and Applications, in: Abel Symp., vol. 2, Springer, Berlin, 2007, pp.541--567.

\bibitem{peng4} Peng S G,  Nonlinear expectations and stochastic calculus under uncertainty, 2010, arXiv: 1002.4546v1.

\bibitem{P} Peng S G, Theory, methods and meaning of nonlinear expectation theory (in Chinese), Sci Sin Math, 2017,  47: 1223--1254, doi: 10.1360/N012016-00209.

\bibitem{Song2} Peng S G, Song Y S, Zhang J F, A complete representation theorem for
$G$-martingales,  Stochastics,  2014,  86: 609--631.

\bibitem{Wang3} Wu J L, Yang W, On stochastic differential equations and a generalised Burgers
equation, pp 425--435 in Stochastic Analysis and Its Applications to Finance: Essays in
Honor of Prof. Jia-An Yan (eds. T S Zhang, X Y Zhou), Interdisciplinary Mathematical
Sciences, Vol. 13, World Scientific, Singapore, 2012.

\bibitem{QW} Qiao H J, Wu J L, Characterizing the path-independence of the Girsanov transformation for non-Lipschitz SDEs with jumps,  Statist Probab Lett, 2016, 119: 326--333.

\bibitem{QW2} Qiao H J, Wu J L,  On the path-independence of the Girsanov transformation for stochastic evolution equations with jumps in Hilbert spaces, 2018,
arXiv: 1707.07828.

\bibitem{RW} Ren P P, Wang F Y, Space-Distribution PDEs for Path Independent Additive Functionals
of McKean-Vlasov SDEs, 2018, arXiv: 1805.10841.

\bibitem{RWu} Ren P P, Wu J L,
Matrix-valued SDEs arising from currency exchange markets,
arXiv:1709.00471.

\bibitem{RYS}
 Ren Y, Yin W S, Sakthivel R,  Stabilization of
stochastic differential equations driven by $G$-Brownian motion with
feedback control based on discrete-time state observation,
Automatica J IFAC, 2018, 95: 146--151.

\bibitem{Song} Song Y S,  Uniqueness of the representation for $G$-martingales with finite variation, Electron J Probab, 2012, 17:  1--15.

\bibitem{Wang} Truman A,  Wang F Y, Wu J L, Yang W, A link of stochastic differential equations to nonlinear parabolic equations, Sci. China Math, 2012, 55: 1971--1976.

\bibitem{Wang1} Wang M, Wu J L, Necessary and sufficient conditions for path-independence of Girsanov transformation for infinite-dimensional stochastic evolution equations, Front
Math China, 2014, 9: 601--622.

\bibitem{WG} Wang B J, Gao H J, Exponential Stability of Solutions to Stochastic Differential Equations Driven by $G$-L\'{e}vy Process, 2018, arXiv: 1801.03776.

\bibitem{Wang2} Wu B, Wu J L, Characterizing the path-independent property of the Girsanov density
for degenerated stochastic differential equations, Stat Probab Letters, 2018, 133: 71--79.


\bibitem{HC} Hodges S, Carverhill A, Quasi mean reversion in an efficient stock market: the characterisation of economic equilibria which support Black-Scholes option pricing,  The Economic Journal,  1993, 103: 395--405.

\end{thebibliography}

\end{document}